\newtheorem{theorem}{Theorem}[section]
\newtheorem{lemma}[theorem]{Lemma}
\newtheorem{corollary}[theorem]{Corollary}
\newtheorem{proposition}[theorem]{Proposition}
\theoremstyle{definition}
\newtheorem{remark}[theorem]{Remark}
\newtheorem{definition}[theorem]{Definition}
\newcommand{\bgl}{\begin{equation}}
\newcommand{\egl}{\end{equation}}
\newcommand{\bgln}{\begin{eqnarray}} 
\newcommand{\egln}{\end{eqnarray}}
\newcommand{\bglnoz}{\begin{eqnarray*}} 
\newcommand{\eglnoz}{\end{eqnarray*}}
\newcommand{\btheo}{\begin{theorem}}
\newcommand{\etheo}{\end{theorem}}
\newcommand{\blemma}{\begin{lemma}}
\newcommand{\elemma}{\end{lemma}}
\newcommand{\bproof}{\begin{proof}}
\newcommand{\eproof}{\end{proof}}
\newcommand{\bdefin}{\begin{definition}}
\newcommand{\edefin}{\end{definition}}
\newcommand{\bprop}{\begin{proposition}}
\newcommand{\eprop}{\end{proposition}}
\newcommand{\bcor}{\begin{corollary}}
\newcommand{\ecor}{\end{corollary}}
\newcommand{\cL}{\mathcal L}
\newcommand{\cM}{\mathcal M}
\newcommand{\cO}{\mathcal O}
\newcommand{\cP}{\mathcal P}
\newcommand{\cX}{\mathcal X}
\def\Az{\mathbb{A}}
\def\Cz{\mathbb{C}}
\def\Kz{\mathbb{K}}
\def\Lz{\mathbb{L}}
\def\Mz{\mathbb{M}}
\def\Nz{\mathbb{N}}
\def\Qz{\mathbb{Q}}
\def\Tz{\mathbb{T}}
\newcommand{\mfm}{\mathfrak m}
\newcommand{\mfn}{\mathfrak n}
\newcommand{\mfp}{\mathfrak p}
\newcommand{\mfq}{\mathfrak q}
\newcommand{\ma}{\mapsto} 
\newcommand\onto{\twoheadrightarrow} 
\newcommand{\Hom}{{\rm Hom}\,}
\newcommand{\Aut}{{\rm Aut}\,}
\newcommand{\id}{{\rm id}}
\newcommand{\abs}[1]{\lvert#1\rvert} 
\newcommand{\defeq}{\mathrel{:=}} 
\newcommand{\dop}{\text{: }} 
\newcommand{\tors}{{\rm tors}}
\newcommand{\rec}{{\rm rec}}
\newcommand{\lge}{\left\{} 
\newcommand{\rge}{\right\}} 
\newcommand{\lru}{\left(} 
\newcommand{\rru}{\right)} 
\newcommand{\lsp}{\left\langle} 
\newcommand{\cXp}{\right\rangle} 
\newcommand{\rukl}[1]{\lru #1 \rru}
\newcommand{\gekl}[1]{\lge #1 \rge}
\newcommand{\spkl}[1]{\lsp #1 \cXp} 
\newcommand{\menge}[2]{\gekl{ #1 \dop #2 }}
\newcommand{\ab}{\mbox{{\tiny \textup{ab}}}}
\newcommand{\Gab}{G^{\ab}}
\newcommand{\Real}{{\rm Re}}
\newcommand{\hpsi}{\widecheck{\psi}}
\newcommand{\Hab}[1]{\widecheck{G}^{\mathrm{ab}}_{#1}}
\newcommand{\Gal}{\mathrm{Gal}}
\begin{document}

\date{\today\ (version 1.0)} 
\title{Reconstructing global fields from Dirichlet $L$-series}\thanks{Part of this work was done whilst the first and third author enjoyed the hospitality of the University of Warwick (special thanks to
Richard Sharp for making it possible).} 
\author[G.~Cornelissen]{Gunther Cornelissen}
\address{Mathematisch Instituut, Universiteit Utrecht, Postbus 80.010, 3508 TA Utrecht, Nederland}
\email{g.cornelissen@uu.nl, h.j.smit@uu.nl}
\author[B.~de Smit]{Bart de Smit}
\address{Mathematisch Instituut, Universiteit Leiden, Postbus 9512, 2300 RA Leiden, Nederland}
\email{desmit@math.leidenuniv.nl}
\author[X.~Li]{Xin Li}
\address{School of Mathematical Sciences, Queen Mary University of London, Mile End Road, London E1 4NS, United Kingdom}
\email{xin.li@qmul.ac.uk}
\author[M.~Marcolli]{Matilde Marcolli}
\address{Mathematics Department, Mail Code 253-37, Caltech, 1200 E.\ California Blvd.\ Pasadena, CA 91125, USA}
\email{matilde@caltech.edu}
\author[H.~Smit]{Harry Smit}

\subjclass[2010]{11R37, 11R42,  11R56, 14H30}
\keywords{\normalfont Class field theory, $L$-series, arithmetic equivalence}

\begin{abstract} \noindent We prove that two global fields are isomorphic if and only if there is an isomorphism of groups of Dirichlet characters that preserves $L$-series. 
\end{abstract}

\maketitle

\section{Introduction}

As was discovered by Ga{\ss}mann in 1928 \cite{Gassmann}, number fields are not uniquely determined up to isomorphism by their zeta functions. A theorem of Tate \cite{Tate} implies the same for global function fields. At the other end of the spectrum, results of Neukirch and Uchida \cite{NeukirchInv}, \cite{U3} \cite{U} state that the absolute Galois group does uniquely determine a global field. The better understood abelianized Galois group again does \emph{not} determine the field up to isomorphism at all, as follows from the description of its character group by Kubota (\cite{Kubota}, compare \cite{Onabe}, \cite{AS}).

In this paper, we prove that two global fields $\Kz$ and $\Lz$ are isomorphic if and only if there exists an isomorphism of groups of Dirichlet characters $\hpsi: \ \Hab{\Kz} \cong \Hab{\Lz}$ that preserves $L$-series: $L_{\Kz}(\chi) = L_{\Lz}(\hpsi(\chi))$ for all  $\chi \in \Hab{\Kz}$. A more detailed series of equivalences can be found in the main theorem \ref{THM} below. To connect this theorem to the previous paragraph, observe that the existence of $\hpsi$ without equality of $L$-series is the same as $\Kz$ and $\Lz$ having abelianized Galois groups that are isomorphic as topological groups, and that for the trivial character $\chi_{\text{triv}}$, we have $L_{\Kz}(\chi_{\text{triv}})=\zeta_{\Kz}$, so that preserving $L$-series at $\chi_{\text{triv}}$ is the same as $\Kz$ and $\Lz$ having the same zeta function. 

In global function fields, it seems we need to use the stated hypothesis, and we explicitly construct the isomorphism of function fields via a map of kernels of reciprocity maps (much akin to the final step in Uchida's proof \cite{U}). For number fields, we do not need the full hypothesis: we can prove the stronger result that for every number field, there exists a character of any chosen order $>2$ for which the $L$-series does not equal any other Dirichlet $L$-series of any other field (see Theorem \ref{thm:Bart}). The method here, however, is not via the kernel of the reciprocity map, but rather via representation theory. 

We briefly indicate the relation of our work to previous results. The main result was first stated in a 2010 preprint \cite{CM} by two of the current authors, who discovered it through methods from mathematical physics (\cite{CM}, now split into two parts \cite{CM1} and \cite{CLMS}). For function fields, the main result was proven by one of the authors in \cite{Cor}, using dynamical systems, referring, however, to \cite{CM} for some auxiliary results, of which we present the first published proofs in the current paper. Sections \ref{section:Mz} and \ref{section:NF} contain an independent proof for the number field case that was found by the second author in 2011. The current paper not only presents full and simplified proofs, but also uses only classical methods from number theory (class field theory, Chebotarev, Gr\"unwald-Wang, and inverse Galois theory). In \cite{S}, reconstruction of field extensions of a fixed rational function field from relative $L$-series is treated from the point of view of the method in sections \ref{section:Mz} and \ref{section:NF} of this paper.

After some preliminaries in the first section, we state the main result in section \ref{mainsection}. The next few sections outline the proof of the various equivalences in the main theorem, using basic class field theory and work of Uchida and Hoshi. In the final sections, we deal with number fields, and use representation theory to prove some stronger results.

\section{Preliminaries}\label{section:prelim}

In this section, we set notation and introduce the main object of study. 

A monoid is a semigroup with identity element. If $R$ is a ring, we let $R^*$ denote its group of invertible elements.

Given a global field $\Kz$, we use the word \emph{prime} to denote a prime ideal if $\Kz$ is a number field, and to denote an irreducible effective divisor if $\Kz$ is a global function field. Let $\cP_{\Kz}$ be the set of primes of $\Kz$.  If $\mfp \in \cP_{\Kz}$, let $v_{\mfp}$ be the normalized (additive) valuation corresponding to $\mfp$, $\Kz_{\mfp}$ the local field at $\mfp$, and $\cO_{\mfp}$ its ring of integers. 
Let $\Az_{\Kz,f}$ be the finite adele ring of $\Kz$, $\widehat{\cO}_{\Kz}$ its ring of finite integral adeles and $\Az_{\Kz,f}^*$ the group of ideles (invertible finite adeles), all with their usual topology. Note that in the function field case, infinite places do not exist, so that in that case, $\Az_{\Kz,f} = \Az_{\Kz}$ is the adele ring, $\widehat{\cO}_{\Kz}$ is the ring of integral adeles and $\Az_{\Kz,f}^* = \Az_{\Kz}^*$ is the full group of ideles. If $\Kz$ is a number field, we denote by $\cO_{\Kz}$ its ring of integers. If $\Kz$ is a global function field, we denote by $q$ the cardinality of the constant field. 

Let $I_{\Kz}$ be the multiplicative monoid of non-zero integral ideals/effective divisors of our global field $\Kz$, so $I_{\Kz}$ is generated by $\cP_{\Kz}$.
We extend the valuation to ideals: if $\mfm \in I_{\Kz}$ and $\mfp \in \cP_{\Kz}$, we define $v_{\mfp}(\mfm) \in \mathbb{Z}_{\geq 0}$ by requiring $\mfm = \prod_{\mfp \in \cP_{\Kz}} \mfp^{v_{\mfp}(\mfm)}$. Let $N$ be the \emph{norm function} on the monoid $I_{\Kz}$: it is the multiplicative function defined on primes $\mfp$ by $N(\mfp):=\# \cO_{\Kz}/\mfp$ if $\Kz$ is a number field and $N(\mfp) = q^{\deg(\mfp)}$ if $\Kz$ is a function field. Given two global fields $\Kz$ and $\Lz$, we call a monoid homomorphism $\varphi: \ I_{\Kz} \to I_{\Lz}$ \emph{norm-preserving} if $N(\varphi(\mfm)) = N(\mfm)$ for all $\mfm \in I_{\Kz}$.

We have a canonical projection $$(\cdot)_{\Kz}: \ \Az_{\Kz,f}^* \cap \widehat{\cO}_{\Kz} \to I_{\Kz}, \, (x_{\mfp})_{\mfp} \ma \prod_{\mfp} \mfp^{v_{\mfp}(x_{\mfp})}.$$ A \emph{split for $(\cdot)_{\Kz}$} is by definition a monoid homomorphism $s_{\Kz}: \ I_{\Kz} \to \Az_{\Kz,f}^* \cap \widehat{\cO}_{\Kz}$ with the property that $(\cdot)_{\Kz} \circ s_{\Kz} = \id_{I_{\Kz}}$, and such that for every prime $\mfp$, $s_{\Kz}(\mfp) = (\ldots, 1, \pi_{\mfp}, 1, \ldots)$ for some uniformizer $\pi_{\mfp} \in \Kz_{\mfp}$ with $v_{\mfp}(\pi_{\mfp}) = 1$.

Let $\Gab_{\Kz}$ be the Galois group of a maximal abelian extension $\Kz^{\mathrm{ab}}$ of $\Kz$, a profinite topological group. There is an \emph{Artin reciprocity map} $\Az_{\Kz}^* \rightarrow \Gab_{\Kz}$. In the number field case, we embed $\Az_{\Kz,f}^*$ into the group of ideles $\Az_{\Kz}^*$ via $\Az_{\Kz,f}^* \ni x \ma (1,x) \in \Az_{\Kz}^*$, restrict the Artin reciprocity map to $\Az_{\Kz,f}^*$ and call this restriction $\rec_{\Kz}$. In the function field case, $\rec_{\Kz}$ is just the (full) Artin reciprocity map.

Let $\Tz$ denote the unit circle, equipped with the discrete topology, and $\Hab{\Kz}= \Hom (\Gab_{\Kz}, \Tz)$ the group of continuous linear characters of $\Gab_{\Kz}$. Given $\chi \in \Hab{\Kz}$,
we write $$U(\chi) \defeq \menge{\mfp \in \cP_{\Kz}}{\chi \vert_{\rec_{\Kz}(\cO_{\mfp}^*)} = 1}$$ for the set of \emph{primes where $\chi$ is unramified}. We denote by $\spkl{U(\chi)}$ the submonoid of $I_{\Kz}$ generated by $U(\chi)$, i.e., $\mfm \in I_{\Kz}$ is in $\spkl{U(\chi)}$ if and only if $v_{\mfp}(\mfm) = 0$ for all $\mfp \in \cP_{\Kz} \setminus U(\chi)$. For $\mfm \in \spkl{U(\chi)}$, we set (in a well-defined way, as the choice of $s_{\Kz}$ is up to an element of $\widehat{\cO}_{\Kz}^\ast$): 
\[
\chi(\mfm) \defeq \chi(\rec_{\Kz}(s_{\Kz}(\mfm))),
\]
and for $\mfm \in I_{\Kz} \backslash \spkl{U(\chi)}$ we set $\chi(\mfm) = 0$.

For any $\chi \in \Hab{\Kz}$, the kernel $\ker \chi$ is an open and closed subgroup of $\Gab_{\Kz}$. Therefore, the fixed field of $\chi$, denoted $\Kz_{\chi}$, is a finite Galois extension of $\Kz$. As the extension is finite, there is an $n$ such that $\chi^n$ is the trivial character. It follows that $\text{im} \,\chi$ is a subgroup of the $n^{\text{th}}$ roots of unity, hence cyclic. As $\text{im} \,\chi \cong \Gal(\Kz_{\chi}/\Kz)$, we obtain that $\Kz_{\chi} / \Kz$ is a finite cyclic extension. Conversely, for any finite cyclic extension $\Kz'$ of $\Kz$ there exists a character $\chi \in \Hab{\Kz}$ such that $\ker \chi = \Gal(\Kz^{\text{ab}}/\Kz')$. 

\blemma \label{lem:unram_primes}
Let $\chi \in \Hab{\Kz}$. The primes in $U(\chi)$ are exactly the primes that are unramified in $\Kz_{\chi}$. 
\elemma

\bproof
Using any section of the quotient map $\Gab_{\Kz} \twoheadrightarrow \Gal(\Kz_{\chi}/\Kz)$, the character $\chi$ induces an injective character $\overline{\chi}: \Gal(\Kz_{\chi}/\Kz) \to \Tz$. Under this quotient map, the set $\rec_{\Kz}(\cO_{\mfp}^\ast)$ is mapped surjectively to the inertia group $I_{\mfp}(\Kz_{\chi}/\Kz)$, hence we have $\chi(\rec_{\Kz}(\cO_{\mfp}^\ast)) = \overline{\chi}(I_{\mfp}(\Kz_{\chi}/\Kz))$. As $\overline{\chi}$ is injective, this set is equal to $\{1\}$ precisely when the inertia group is trivial, i.e. when $\mfp$ is unramified in $\Kz_{\chi}$.
\eproof

\blemma\label{lem:max_ext}
For any prime $\mfp \in \cP_{\Kz}$, set $N_{\mfp}:=\bigcap\limits_{\chi: \; \mfp \in U(\chi)} \ker \chi$. Then $\rec_{\Kz}(\cO_{\mfp}^*) = N_{\mfp}$, and the associated fixed field is equal to $\Kz^{\mathrm{ur},\mfp}$, the maximal abelian extension of $\Kz$ unramified at $\mfp$. 
\elemma

\bproof
By definition of $U(\chi)$, for any $\chi \in \Hab{\Kz}$ with $\mfp \in U(\chi)$ we have $\rec_{\Kz}(\cO_{\mfp}^*) \subseteq \ker \chi$, hence $\rec_{\Kz}(\cO_{\mfp}^*) \subseteq N_{\mfp}.$ As $\cO_{\mfp}^*$ is compact and $\rec_{\Kz}$ is continuous, $\rec_{\Kz}(\cO_{\mfp}^*)$ is compact, and as $\Gab_{\Kz}$ is Hausdorff, $\rec_{\Kz}(\cO_{\mfp}^*)$ is closed. 

Let $\Kz_{\cO_{\mfp}^*}$ denote the fixed field of $\rec_{\Kz}(\cO_{\mfp}^*)$. Under the quotient map $\Gab_{\Kz} \twoheadrightarrow \Gal(\Kz_{\cO_{\mfp}^*} / \Kz)$, $\rec_{\Kz}(\cO_{\mfp}^*)$ is mapped to the inertia group $I_{\mfp}(\Kz_{\cO_{\mfp}^*} / \Kz)$ by class field theory, but it is also mapped to $\{1\}$ by definition. Hence the inertia group is trivial, and 
$$(\Kz^{\mathrm{ab}})^{\rec_{\Kz}(\cO_{\mfp}^*)} = \Kz_{\cO_{\mfp}^*} \subseteq \Kz^{\mathrm{ur},\mfp}.$$

The extension associated to $N_{\mfp}$  
contains the composite of the extensions $\Kz_\chi$ associated to the individual $\chi$. The field $\Kz^{\mathrm{ur},\mfp}$ is a composite of finite cyclic extensions  of $\Kz$ unramified at $\mfp$. As mentioned, for any such a finite cyclic extension $\Kz'$ there exists a $\chi$ such that $\ker \chi = \Gal(\Kz^{\text{ab}}/\Kz')$. Because this extension is unramified at $\mfp$, the previous lemma asserts that $\mfp \in U(\chi)$. It follows that  $$\Kz^{\mathrm{ur},\mfp} \subseteq (\Kz^{\mathrm{ab}})^{N_{\mfp}}.$$

Combining these two results, we obtain $\rec_{\Kz}(\cO_{\mfp}^*) \supseteq N_{\mfp}$, and the result follows. 
\eproof

The \emph{(Dirichlet) $L$-function} attached to a character $\chi$ of $\Kz$ is given by the function of the complex variable $s$:
$$L_{\Kz}(\chi) = \prod_{\mfp \in U(\chi)} (1 - \chi(\mfp) N(\mfp)^{-s})^{-1} = \sum_{\mfm \in \spkl{U(\chi)}} \chi(\mfm) N(\mfm)^{-s}.$$
(We drop the field $\Kz$ from the notation for the $L$-series unless confusion might arise.)

Let $\chi$ be a character with associated extension $\Kz_\chi$. For the primes in $U(\chi)$ we have that $\rec_{\Kz}(s_{\Kz}(\mfp)) \mbox{ mod } \ker \chi$ is independent of the choice of the split $s_{\Kz}$, and equal to the \emph{Frobenius} $\mathrm{Frob}_\mfp$ in $\mathrm{Gal}(\Kz'/\Kz)$; note that in a general extension, ``the'' Frobenius of an unramified prime in $\Kz$ is a conjugacy class in the Galois group, but in our setting of abelian extensions, it is an actual element. 
The $L$-series can thus be written as
$$
L(\chi) 
  = \prod_{\mfp \in U(\chi)} (1 - \chi(\mathrm{Frob}_\mfp) N(\mfp)^{-s})^{-1}. 
$$
\section{The main theorem} \label{mainsection}

\btheo
\label{THM}
Let $\Kz$ and $\Lz$ be two global fields. The following are equivalent:
\begin{enumerate} 
\item[(i)] There exists a monoid isomorphism $\varphi: \ I_{\Kz} \cong I_{\Lz}$, an isomorphism of topological groups $\psi: \ \Gab_{\Kz} \cong \Gab_{\Lz}$ and splits $s_{\Kz}: \ I_{\Kz} \to \Az_{\Kz,f}^* \cap \widehat{\cO}_{\Kz}$, $s_{\Lz}: \ I_{\Lz} \to \Az_{\Lz,f}^* \cap \widehat{\cO}_{\Lz}$ such that
\bgln
\label{iv1}
  && \psi(\rec_{\Kz}(\cO_{\mfp}^*)) = \rec_{\Lz}(\cO_{\varphi(\mfp)}^*) \ \text{for every prime} \ \mfp \ \text{of} \ \Kz, \\
\label{iv2}
  && \psi(\rec_{\Kz}(s_{\Kz}(\mfm))) = \rec_{\Lz}(s_{\Lz}(\varphi(\mfm))) \ \text{for all} \ \mfm \in I_{\Kz}.
\egln
\item[(ii)] There exists 
\begin{itemize} 
\item a norm-preserving monoid isomorphism $\varphi: \ I_{\Kz} \cong I_{\Lz}$, and 
\item an isomorphism of topological groups $\psi: \ \Gab_{\Kz} \cong \Gab_{\Lz}$,
\end{itemize}
such that for every finite abelian extension $\Kz'=\left(\Kz^{\ab}\right)^N$ of $\Kz$ ($N$ a subgroup in $G_{\Kz}^{\ab}$) with corresponding field extension $\Lz' =\left(\Lz^{\ab}\right)^{\psi(N)}$ of $\Lz$, $\varphi$ is a bijection between the unramified primes of $\Kz'/\Kz$ and $\Lz'/\Lz$ such that
\[
\psi(\normalfont{\text{Frob}}_{\mfp}) = \normalfont{\text{Frob}}_{\varphi(\mfp)}.
\] 
\item[(iii)] There exists an isomorphism of topological groups $\psi: \ \Gab_{\Kz} \cong \Gab_{\Lz}$ such that
\bgl
\label{v}
  L(\chi) = L(\hpsi(\chi)) \ \text{for all} \ \chi \in \Hab{\Kz},
\egl
where $\hpsi$ is given by $\hpsi(\chi) = \chi \circ \psi^{-1}$.
\item[(iv)] $\Kz$ and $\Lz$ are isomorphic as fields.
\end{enumerate}
\etheo

We will refer to condition (i) as a \emph{reciprocity isomorphism}, condition (ii) as a \emph{finite reciprocity isomorphism}, and condition (iii) as an \emph{L-isomorphism}. We will prove the implication (i) $\Rightarrow$ (ii) in Proposition~\ref{equifin}, implication (ii) $\Rightarrow$ (iii) in Proposition~\ref{reci-->LIso}, and implication (iii) $\Rightarrow$ (i) in Proposition~\ref{LIso-->reci}. The equivalence with (iv) is proven in \ref{PROP} for function fields and follows from \ref{thm:Bart} for number fields. 
\section{From reciprocity isomorphism to finite reciprocity isomorphism}
\bprop
\label{reci-->N}
Assume \textup{\ref{THM}(i)}.
Then $\varphi$ is norm-preserving, i.e.
\bgl
\label{N=Nphi}
  N(\mfp) = N(\varphi(\mfp))\ \text{for all} \ \mfp \in \cP_{\Kz}.
\egl
\eprop
\bproof
We have $$\cO_{\mfp}^* \cong \rec_{\Kz}(\cO_{\mfp}^*) \cong \rec_{\Lz}(\cO_{\varphi(\mfp)}^*) \cong \cO_{\varphi(\mfp)}^*$$ for all $\mfp \in \cP_{\Kz}$ as the local reciprocity map is injective. Thus, it suffices to find a way to read off $N(\mfp)$ from the isomorphism type of $\cO_{\mfp}^*$. Let $\tors(A)$ be the torsion subgroup of an abelian group $A$, and let $A_p$ be the $p$-primary part of a finite abelian group $A$. Assume that $N(\mfp) = p^f$ for some prime $p$ and $f \geq 1$. The following facts follow from \cite[Chapter~II, Proposition~(5.7)]{Neu}:
\begin{itemize}
\item $p$ is uniquely determined by the property $\rukl{\cO_{\mfp}^* / \tors(\cO_{\mfp}^*)}^p \neq \cO_{\mfp}^* / \tors(\cO_{\mfp}^*)$,
\item $N(\mfp) = p^f = \abs{\tors(\cO_{\mfp}^*) / \tors(\cO_{\mfp}^*)_p} + 1$.
\end{itemize}
So we indeed have \eqref{N=Nphi}.
\eproof

\bprop \label{equifin}
Condition \textup{\ref{THM}(i)} implies condition \textup{\ref{THM}(ii)}.
\eprop
\bproof
A prime $\mfp$ of $\Kz$ is unramified in $\Kz'$ if and only if $\rec_{\Kz}(\cO^*_\mfp) \subseteq N$. Therefore, (1) implies that $\varphi(\mfp)$ is unramified in $\Lz'$, and the bijection follows by symmetry. Also, $\rec_{\Kz}(s_{\Kz}(\mfp)) \mbox{ mod } N$ is independent of the choice of the split $s_{\Kz}$, and equal to the Frobenius $\mathrm{Frob}_\mfp$ in $\mathrm{Gal}(\Kz'/\Kz)$. Hence from (2) we obtain $\psi(\mathrm{Frob}_\mfp) = \mathrm{Frob}_{\varphi(\mfp)}$. This proves 
\textup{(ii)}. 
\eproof

\section{From finite reciprocity isomorphism to L-isomorphism}

\bprop
\label{reci-->LIso}
Assume \textup{\ref{THM}(ii)}. Then $L(\chi) = L(\hpsi(\chi))$ for all $\chi \in \Hab{\Kz}$.
\eprop
\bproof Let $\chi \in \Hab{\Kz}$, and let $\Kz_{\chi} = (\Kz^{\rm ab})^{\ker \chi}$ denote the fixed field of $\ker \chi$. As mentioned in Section~\ref{section:prelim}, the associated $L$-series can be written as
\bglnoz
L(\chi) &=& \prod_{\mfp \in U(\chi)} (1 - \chi(\mathrm{Frob}_\mfp) N(\mfp)^{-s})^{-1}. 
\eglnoz 
We have $\psi (\ker \chi )= \ker ( \hpsi (\chi))$ by definition of $\hpsi$. Let $\Lz_{\hpsi(\chi)} = (\Lz^{\rm ab})^{\psi(\ker \chi)}$. Let $\mfp \in \cP_{\Kz}$ and $\mfq:= \varphi(\mfp) \in \cP_{\Lz}$. By assumption, $\varphi(U(\chi)) = U(\hpsi(\chi))$, $N(\mfp)=N(\mfq)$ and if $\mfp \in U(\chi)$ then $\psi(\mathrm{Frob}_\mfp)=\mathrm{Frob}_\mfq$ and thus $\chi(\mathrm{Frob}_\mfp) = \hat \psi(\chi)(\mathrm{Frob}_\mfq)$. It follows that
\begin{align*}
  L(\chi) 
  &= \prod_{\mfp \in U(\chi)} (1 - \chi(\mathrm{Frob}_\mfp) N(\mfp)^{-s})^{-1} \\
  &= \prod_{\mfq \in U(\hpsi(\chi))} (1 - \hpsi(\chi)(\mathrm{Frob}_\mfq) N(\mfq)^{-s})^{-1} \\
  &= L(\hpsi(\chi)). \qedhere
\end{align*} 
\eproof

\section{From L-isomorphism to reciprocity isomorphism} \label{LtoR}

\bprop
\label{LIso-->reci}
Assume that there exists an isomorphism of topological groups $\psi: \ \Gab_{\Kz} \cong \Gab_{\Lz}$ with $L(\chi) = L(\hpsi(\chi))$ for all $\chi \in \Hab{\Kz}$. Then \textup{\ref{THM}(i)} holds. 
\eprop

The idea of the proof is to construct a bijection of primes $\varphi$ with the desired properties via the use of a type of characters that have value $1$ on all but one of the primes of a certain norm.

For the proof, which will occupy this entire section, we will use the following notation. Whenever an object has subscript $N, <N$, or $\geq N$, all associated sets of (prime) ideals are restricted to (prime) ideals of norm $N, <N$ or $\geq N$ respectively. For example, $U_N(\chi)$ is the set of primes of norm $N$ at which $\chi$ is unramified. We use the multiplicative form of the $L$-series, i.e. 
\[
L_{<N}(\chi) = \prod_{\mfp \in U_{<N}(\chi)} (1 - \chi(\mathrm{Frob}_\mfp) N(\mfp)^{-s})^{-1}
\]
and $L_{\geq N}(\chi)$ is defined similarly.

Our approach is as follows: for every $N \in \Nz$ we construct a bijection $$\varphi_N: \: \cP_{\Kz, N} \to \cP_{\Lz, N}$$ such that $\chi(\mfp) = \hpsi(\chi)(\varphi_N(\mfp))$ for all $\chi \in \Hab{\Kz}$ and all $\mfp \in \cP_{\Kz, N}$. That suffices to prove (i), as we now first explain. We obtain a bijection $$\varphi: \: \cP_{\Kz} \to \cP_{\Lz}$$ by setting $\varphi \vert_{\cP_{\Kz, N}} \defeq \varphi_N,$ so that $\varphi$ satisfies $\chi(\mfp) = \hpsi(\chi)(\varphi(\mfp))$, for all $\chi \in \Hab{\Kz}$ and all $\mfp \in \cP_{\Kz}$. From this, the following sequence of equivalences follows:
\[
\mfp \in U(\chi) \Longleftrightarrow \chi(\mfp) \neq 0 \Longleftrightarrow \hpsi(\chi)(\varphi(\mfp)) \neq 0 \Longleftrightarrow \varphi(\mfp) \in U(\hpsi(\chi)).
\]
Because $\varphi$ is a bijection, we obtain $\varphi(U(\chi)) = U(\hpsi(\chi))$. Certainly $\varphi$ gives rise to a monoid isomorphism $I_{\Kz} \cong I_{\Lz}$, and \eqref{iv1} follows from Lemma~\ref{lem:max_ext} combined with $\varphi(U(\chi)) = U(\hpsi(\chi))$. 
Thus, for every $\mfp \in \cP_{\Kz}$, $\chi(\mfp) = \hpsi(\chi)(\varphi(\mfp))$ for all $\chi \in \Hab{\Kz}$ with $\mfp \in U(\chi)$ implies that for any splits $s_{\Kz}$ and $s_{\Lz}$, $$\psi(\rec_{\Kz}(s_{\Kz}(\mfp))) \equiv \rec_{\Lz}(s_{\Lz}(\varphi(\mfp))) \ \mathrm{mod} \ \rec_{\Lz}(\cO_{\varphi(\mfp)}^*).$$ Hence we can modify our splits to obtain \eqref{iv2}, and this proves (i).\\

To construct $\varphi_N$, the strategy is to proceed inductively on $N$. For $N=1$, there is nothing to do.
Assume that we have constructed $\varphi_M$ for all $M < N$. Let $$\varphi_{< N}: \cP_{\Kz, < N} \to \cP_{\Lz, < N}$$ be the norm-preserving bijection obtained by combining the $\varphi_M$.
Since $$U_{< N}(\hpsi(\chi)) = \varphi_{<N}(U_{<N}(\chi)), \ \chi(\mfp) = \hpsi(\chi)(\varphi_{< N}(\mfp)), \mbox{ and } N(\mfp) = N(\varphi_{< N}(\mfp)),$$ it follows that $$L_{< N}(\chi) = L_{< N}(\hpsi(\chi)).$$ Since $L(\chi) = L(\hpsi(\chi))$ and $L(\chi) = L_{< N}(\chi)L_{\geq N}(\chi)$ we also have $$L_{\geq N}(\chi) = L_{\geq N}(\hpsi(\chi)).$$ We can apply this equality to prove the following lemma.

\blemma \label{lem:sum_equal}
For any character $\chi \in \Hab{\Kz}$, define 
\[
\cX_N(\chi) := \sum_{\mfp \in \cP_{\Kz, N}} \chi(\mfp) = \sum_{\mfp \in U_N(\chi)} \chi(\mfp).
\]
Then $\cX_N(\chi) = \cX_N(\hpsi(\chi))$ for all $\chi \in \Hab{\Kz}$. As a result, $|\cP_{\Kz, N}| = |\cP_{\Lz, N}|$.
\elemma

\bproof
Let $\spkl{\cP_{\Kz, \geq N}}$ be the submonoid of $I_{\Kz}$ generated by $\cP_{\Kz, \geq N}$. We write $L_{\geq N}(\chi)$ in additive form:
\[
L_{\geq N}(\chi) = \sum_{\mfm \in \spkl{\cP_{\Kz, \geq N}}} \chi(\mfm) N(\mfm)^{-s} = \sum_{M \geq N} \left(\sum_{\mfm \in \spkl{\cP_{\Kz, \geq N}} \cap I_{\Kz,M}} \chi(\mfm) \right) M^{-s}.
\]
As $L_{\geq N}(\chi) = L_{\geq N}(\hpsi(\chi))$, the coefficients of $N^{-s}$ in both sums are equal, hence
\[
\sum_{\mfm \in \spkl{\cP_{\Kz, \geq N}} \cap I_{\Kz,N}} \chi(\mfm) = \sum_{\mfn \in \spkl{\cP_{\Lz, \geq N}} \cap I_{\Lz,N}} \hpsi(\chi)(\mfn).
\]
As $\spkl{\cP_{\Kz, \geq N}} \cap I_{\Kz,N} = \cP_{\Kz, N}$ and $\spkl{\cP_{\Lz, \geq N}} \cap I_{\Lz,N} = \cP_{\Lz, N}$, the desired equality follows. The final result is obtained by setting $\chi = 1$. 
\eproof

\bdefin
Let $c_N := |\cP_{\Kz, N}| = |\cP_{\Lz, N}|$. For any character $\chi \in \Hab{\Kz}$, define $u_N(\chi) = |U_N(\chi)|$, $V_N(\chi) = \{ \mfp \in U_N(\chi) \mid \chi(\mfp) = 1\}$, and $v_N(\chi) = |V_N(\chi)|$. If $v_N(\chi) = u_N(\chi) - 1$, there exists a unique prime of norm $N$ on which $\chi$ has a value of neither $0$ nor $1$. We will denote this prime by $\mfp_\chi$. 
Lastly, we define the following sets of characters:
\begin{align*}
\Xi_{\Kz}^1 &:= \left\{\chi \in \Hab{\Kz}: u_N(\chi) = v_N(\chi) = c_N \right\} \\
\Xi_{\Kz}^2 &:= \left\{\chi \in \Hab{\Kz}: u_N(\chi) = c_N,\; v_N(\chi) = c_N - 1, \; \chi(\mfp_\chi) = \zeta \right\},
\end{align*}
where $\zeta = \exp(2 \pi i / k)$ for some fixed integer $k \geq 3$.
\edefin

\begin{remark}
Let $\chi \in \Hab{\Kz}$. As $|\chi(\mfp)| = 1$ for all $\mfp \in U_N(\chi)$, we have $\Real(\cX_N(\chi)) \leq u_N(\chi)$. Equality holds precisely when $\chi(\mfp) = 1$ for all $\mfp \in U_N(\chi)$, i.e. $u_N(\chi) = v_N(\chi)$. 
\end{remark}

\blemma \label{Delta1}
$\hpsi(\Xi_{\Kz}^1) = \Xi_{\Lz}^1$. 
\elemma

\bproof
From Lemma~\ref{lem:sum_equal} we obtain $\chi \in \Xi_{\Kz}^1 \Longleftrightarrow \cX_N(\chi) = c_N \Longleftrightarrow \cX_N(\hpsi(\chi)) = c_N \Longleftrightarrow \hpsi(\chi) \in \Xi_{\Lz}^1$.
\eproof

\blemma\label{lem:mu_k_sums}
For $k \geq 3$, let $\mu_k$ denote the $k^\text{th}$ roots of unity and let $\zeta = \text{exp}(2\pi i / k)$. Suppose we have $a_1, \dots, a_n \in \mu_k \cup \{0\}$ such that $a_1 + \dots + a_n = n - 1 + \zeta$. Then there is a $j$ such that $a_j = \zeta$, and $a_i = 1$ for all $i \neq j$. 
\elemma

\bproof
If $k = 3$, then $\zeta$ is the only possible value with a positive imaginary part. Hence one of the $a_j$ equals $\zeta$, and then $a_i=1$ for the remaining $i \neq j$, since they have to sum to $n-1$.
For $k>3$, let $R:=\max \mathrm{Re}(\mu_k-\{1\})$, and let $f$ denote the number of $a_i$ that are not equal to $1$. Since $0 \leq R=\mathrm{Re}(\zeta)< 1$, we find that $n-1 + R =  n - 1 + \mathrm{Re}(\zeta) = \mathrm{Re}(a_1 + \dots + a_n) \leq n - f + f\text{Re}(\zeta) = n +f(R-1)$, so $R-1 \leq f(R-1)$, hence $f \leq 1$.
\eproof

\blemma
$\hpsi(\Xi_{\Kz}^2) = \Xi_{\Lz}^2$. 
\elemma

\bproof
For any character $\chi \in \Xi_{\Kz}^2$ we have $\chi^k \in \Xi_{\Kz}^1$. Thus, by Lemma~\ref{Delta1}, $\hpsi(\chi)^k \in \Xi_{\Lz}^1$. Hence for any $\mfq \in \cP_{\Lz, N}$ we have that $\hpsi(\chi)(\mfq) \in \mu_k \cup \{0\}$. As $\cX_N(\hpsi(\chi)) = \cX_N(\chi) = c_N - 1 + \zeta$, by the previous lemma there exists a single prime $\mfq_{\hpsi(\chi)}$ such that $\hpsi(\chi)(\mfq_{\hpsi(\chi)}) = \zeta$, while $\hpsi(\chi)(\mfq) = 1$ for all $\mfq \neq \mfq_{\hpsi(\chi)}$. Hence $\hpsi(\Xi_{\Kz}^2) \subseteq \Xi_{\Lz}^2$. By symmetry, we have equality.
\eproof

A character $\chi$ in $\Xi^2_{\Kz}$ has a special prime $\mfp_\chi$, and the corresponding character $\hpsi(\chi) \in \Xi^2_{\Lz}$ has a special prime $\mfq_{\hpsi(\chi)}$. We obtain an association of primes
\begin{align*}
\varphi_N: \cP_{\Kz, N} &\to \cP_{\Lz, N} \\
\mfp_\chi &\mapsto \mfq_{\hpsi(\chi)}.
\end{align*}
We show that this association is in fact a well-defined bijection with the required property. The first step is to show that every prime of $\cP_{\Kz, N}$ is associated to at least one prime of $\cP_{\Lz, N}$.

\blemma
For every $\mfp' \in \cP_{\Kz, N}$ there exists a character $\chi \in \Xi_{\Kz}^2$ such that $\mfp_\chi = \mfp'$. 
\elemma

\bproof
The Gr\"unwald-Wang Theorem (\cite{A-T}, Ch.\ X, Thm.\ 5) guarantees that there exists a character $\chi \in \Hab{\Kz}$ such that $\chi(\mfp') = \zeta$ and $\chi(\mfp) = 1$ for all primes $\mfp \neq \mfp'$ of norm $N$, because there exists a character of $\Gab_{\Kz_{\mfp'}}$ whose fixed field is the unique unramified extension of degree $k$ of $\Kz_{\mfp'}$. 
\eproof

\blemma
The association $\varphi_N$ is a well-defined bijection such that for every $\chi \in \Hab{\Kz}$ and $\mfp \in \cP_{\Kz, N}$ we have $\chi(\mfp) = \hpsi(\chi)(\varphi_N(\mfp))$.
\elemma

\bproof
Suppose we have $\chi, \chi' \in \Xi_{\Kz}^2$ such that $\mfp_{\chi} = \mfp_{\chi'}$ and $\varphi_N(\mfp_\chi) \neq \varphi_N(\mfp_{\chi'})$. We have $\cX_N(\chi \cdot \chi') = c_N - 1 + \zeta^2$, while $\cX_N(\hpsi(\chi \cdot \chi')) = \cX_N(\hpsi(\chi) \cdot \hpsi(\chi')) = c_N - 2 + 2 \zeta \neq c_N - 1 + \zeta^2$, which contradicts Lemma~\ref{lem:sum_equal}.
We conclude that $\varphi_N(\mfp_\chi) = \varphi_N(\mfp_{\chi'})$. Using $\hpsi^{-1}$ instead of $\hpsi$ provides a well-defined inverse $\varphi_N^{-1}$.

Take $\chi' \in \Xi_{\Kz}^2$ such that $\mfp_{\chi'} = \mfp$. We have $\cX_N(\chi \cdot \chi') = \cX_N(\chi) - \chi(\mfp) + \zeta \chi(\mfp) = \cX_N(\chi) - (1 - \zeta) \chi(\mfp)$. Similarly, $\cX_N(\hpsi(\chi \cdot \chi')) = \cX_N(\hpsi(\chi) \cdot \hpsi(\chi')) = \cX_N(\hpsi(\chi)) - (1 - \zeta)\hpsi(\chi)(\varphi_N(\mfp))$. As we have both $\cX_N(\chi) = \cX_N(\hpsi(\chi))$ and $\cX_N(\chi \cdot \chi') = \cX_N(\hpsi(\chi \cdot \chi'))$, we conclude that $(1 - \zeta) \chi(\mfp) = (1 - \zeta)\hpsi(\chi)(\varphi_N(\mfp))$ and consequently $\chi(\mfp) = \hpsi(\chi)(\varphi_N(\mfp))$.
\eproof

This completes the proof of Proposition~\ref{LIso-->reci}.

\section{Conditional reconstruction of global fields}

\bprop
Assume the equivalent statements \textup{(i)--(iii)} of Theorem~\ref{THM}. Then there exists an isomorphism $\Psi$ such that
\bgl \label{smalldiagram}
  \xymatrix@C=8mm{
  \Az_{\Kz,f}^* \cap \widehat{\cO}_{\Kz} \ar[d]_{\rec_{\Kz}} \ar[r]^{\Psi} & \Az_{\Lz,f}^* \cap \widehat{\cO}_{\Lz} \ar[d]^{\rec_{\Lz}} \\ 
  \Gab_{\Kz} \ar[r]^{\psi} & \Gab_{\Lz}
  }
\egl
commutes.
\eprop

\bproof
Define the homomorphism  $\widehat{\cO}_{\Kz}^* \times I_{\Kz} \to \Az_{\Kz,f}^* \cap \widehat{\cO}_{\Kz}$ by $(u,\mfm) \ma u \cdot s_{\Kz}(\mfm)$. It has a complete inverse $\Az_{\Kz,f}^* \cap \widehat{\cO}_{\Kz} \to \widehat{\cO}_{\Kz}^* \times I_{\Kz}$  given by $x \mapsto (x \cdot s_{\Kz}((x)_{\Kz})^{-1}, (x)_{\Kz})$.
We obtain an isomorphism $\Az_{\Kz,f}^* \cap \widehat{\cO}_{\Kz} \cong \widehat{\cO}_{\Kz}^* \times I_{\Kz}$ (and similarly for $\Lz$).

As seen in the proof of Proposition~\ref{reci-->N},
\[
\cO_{\mfp}^* \cong \rec_{\Kz}(\cO_{\mfp}^*) \stackrel{\psi}{\cong} \rec_{\Lz}(\cO_{\varphi(\mfp)}^*) \cong \cO_{\varphi(\mfp)}^*.
\]
We obtain an isomorphism $\Psi_{\mfp}: \cO_{\mfp}^* \cong \cO_{\varphi(\mfp)}^*$ that fits into the following commutative diagram:
\[ 
  \xymatrix@C=8mm{
  \cO_{\mfp}^* \ar[d]_{\rec_{\Kz}} \ar[r]^{\Psi_{\mfp}} & \cO_{\varphi(\mfp)}^* \ar[d]^{\rec_{\Lz}} \\ 
  \Gab_{\Kz} \ar[r]^{\psi} & \Gab_{\Lz}
  }
\]

As we have $\psi(\rec_{\Kz}(s_{\Kz}(\mfm))) = \rec_{\Lz}(s_{\Lz}(\varphi(\mfm)))$ for all $\mfm \in I_{\Kz}$ by assumption, the map $(\prod \Psi_{\mfp}) \times \varphi$ fits in the commutative diagram 
\[ 
  \xymatrix@C=8mm{
  \widehat{\cO}_{\Kz}^* \times I_{\Kz} \ar[d]_{\rec_{\Kz}} \ar[rr]^{(\prod \Psi_{\mfp}) \times \varphi} & & \widehat{\cO}_{\Lz}^* \times I_{\Lz} \ar[d]_{\rec_{\Lz}} \\ 
  \Gab_{\Kz} \ar[rr]^{\psi} & & \Gab_{\Lz}
  }
\]
With use of the aforementioned identifications $\Az_{\Kz,f}^* \cap \widehat{\cO}_{\Kz} \cong \widehat{\cO}_{\Kz}^* \times I_{\Kz}$ and $\Az_{\Lz,f}^* \cap \widehat{\cO}_{\Lz} \cong \widehat{\cO}_{\Lz}^* \times I_{\Lz}$ we obtain the desired isomorphism $\Psi$.
\eproof

\bcor
Diagram (\ref{smalldiagram}) can be extended to a commutative diagram 
\bgl \label{bigdiagram}
  \xymatrix@C=8mm{
  \Az_{\Kz,f}^*  \ar[d]_{\rec_{\Kz}} \ar[r]^{\Psi} & \Az_{\Lz,f}^* \ar[d]^{\rec_{\Lz}} \\ 
  \Gab_{\Kz} \ar[r]^{\psi} & \Gab_{\Lz}
  }
\egl
\bproof
The reciprocity map is already defined from  $\Az_{\Kz,f}^*$ to $\Gab_{\Kz}$, so this follows from (\ref{smalldiagram}) by passing to the group of fractions $\Az_{\Kz,f}^*$ of the monoid $\Az_{\Kz,f}^* \cap \widehat{\cO}_{\Kz}$.   
\eproof

\ecor

We now turn to the reconstruction of isomorphism of fields from the equivalent conditions in the main theorem \ref{THM}. For this, we first quote a result about conditions under which an isomorphism of multiplicative groups of fields can be extended to a field isomorphism. Let $$\Pi_{\mfp}: \: \Az_{\Kz,f}^\ast \onto \Kz_{\mfp}^*$$ be the canonical projection, and $\cO_{[\mfp]}$ the local ring (of $\Kz$) at the prime $\mfp$. 

\blemma[Uchida/Hoshi] \label{Hoshi}
An isomorphism $\Psi \colon \Kz^* \rightarrow \Lz^*$ of multiplicative groups of two global fields $\Kz$ and $\Lz$ is the restriction of an isomorphism of fields if and only if there exists a bijection $\varphi \colon \cP_{\Kz} \rightarrow \cP_{\Lz}$ such that for all  $\mfp \in \cP_{\Kz}$, both the following hold: \begin{itemize}
\item[\textup{(i)}] 
$ \Psi(1 + \mfp \cO_{[\mfp]}) = 1 + \varphi(\mfp) \cO_{[\varphi(\mfp)]}$ (as sets),  
\item[\textup{(ii)}] $v_{\varphi(\mfp)} \circ \Pi_{\varphi(\mfp)} \circ \Psi = v_{\mfp} \circ \Pi_{\mfp}.$
\end{itemize}

\elemma

\bproof
This follows immediately by results of Uchida for global function fields (\cite{U}, Lemmas 8-11; as explained in the introduction of \cite{Ho}) and by Hoshi for number fields (\cite{Ho}, Theorem D).
\eproof

\btheo \label{REC}
Assume that $\Psi$ in \textup{(\ref{bigdiagram})} above satisfies $\Psi(\Kz^*) = \Lz^*$. Then the extension of that isomorphism to a map $\Kz \to \Lz$ by setting $0 \mapsto 0$ is an isomorphism of fields. 
\etheo
\bproof
Fix $\mfp \in \cP_{\Kz}$.  By construction, there is an isomorphism $\Psi_{\mfp}: \: \Kz_{\mfp}^* \cong \Lz_{\varphi(\mfp)}^*$ such that $$\Pi_{\varphi(\mfp)} \circ \Psi = \Psi_{\mfp} \circ \Pi_{\mfp}.$$

The one-units of the complete local ring are simply the $n:=(N(\mfp)-1)$-th powers (\cite{Neu}, Proposition 5.7, Chapter II): \begin{equation} \label{student} 1 + \pi_\mfp \cO_{\mfp} = (\cO_{\mfp}^*)^{N(\mfp)-1}.\end{equation}

Since $\Psi_{\mfp}$ is multiplicative, we find $$\Psi_{\mfp}(1 + \mfp \cO_{\mfp}) = 1 + \varphi(\mfp) \cO_{\varphi(\mfp)}.$$ We conclude that for the local rings, we have
\bglnoz
  \Psi(1 + \mfp \cO_{[\mfp]}) &=& \Psi(\Kz^* \cap \Pi_{\mfp}^{-1}(1 + \mfp \cO_{\mfp})) \\ &=& \Psi(\Kz^*) \cap \Pi_{\varphi(\mfp)}^{-1}(1 + \varphi(\mfp) \cO_{\varphi(\mfp)})\\
  &=& 1 + \varphi(\mfp) \cO_{[\varphi(\mfp)]}.
\eglnoz
This proves condition (i) in Lemma \ref{Hoshi}. 
For condition (ii), observe that from the definition of $\Psi$ it follows that $\Psi(s_{\Kz}(\mfp)) = s_{\Lz}(\varphi(\mfp))$, and since $\Pi_\mfp(s_{\Kz}(\mfp))=\pi_\mfp$ by definition of a split (for a chosen uniformizer $\pi_\mfp$), the result follows. 

\eproof

\section{Reconstruction of global function fields} 
In function fields, we can immediately apply the results of the previous section: 

\bprop \label{PROP} If one of the equivalent conditions \textup{(i)--(iii)} of Theorem \ref{THM} holds for two global function fields $\Kz$ and $\Lz$, then they are isomorphic as fields. 
\eprop

\bproof
From the commutativity of diagram (\ref{bigdiagram}) we find that 
$ \Psi(\ker(\rec_{\Kz})) = \ker(\rec_{\Lz}). $
For a global function field $\Kz$ we have $\ker(\rec_{\Kz}) = \Kz^*$, so the result follows from Theorem \ref{REC}.
\eproof

\begin{remark} The equivalence of (v) in Theorem \ref{THM} and field isomorphism in global function fields was also shown in \cite{Cor} using dynamical systems, but referring to the unpublished \cite{CM} for a proof of the result in Section \ref{LtoR} of this paper. 
\end{remark}

\section{The number field case: an auxiliary result}\label{section:Mz}
In this section we prove the existence of certain Galois extensions of number fields with prescribed Galois groups; a result that we will use in the next section to prove the reconstruction of number fields from the consideration of specific induced representations.

\bprop\label{prop:wreath_ext}
Let $\Kz$ be a number field of degree $n$ contained in a Galois extension $\Nz$ of $\Qz$, and let $C$ be a finite cyclic group.  Denote $G = \normalfont{\Gal}(\Nz/\Qz)$ and $H = \normalfont{\Gal}(\Nz/\Kz)$ and let $C^n \rtimes G$ be the semidirect product of $C^n$ and $G$, where the action of $G$ on $C^n$ is by permuting coordinates the same way $G$ permutes the cosets $G/H$. By $C^n \rtimes H$ we denote the subgroup  of $C^n \rtimes G$ generated by $C^n$ and $H$. There exists a Galois extension $\Mz$ of $\mathbb{Q}$ containing $\Nz$ such that 
\[
\normalfont{\Gal}(\Mz/\Qz) = C^n \rtimes G, \; \normalfont{\Gal}(\Mz/\Kz) = C^n \rtimes H, \; \text{and } \normalfont{\Gal}(\Mz/\Nz) = C^n.
\]
\eprop

\begin{remark} The semidirect product $C^n \rtimes G$ is also known as the {\em wreath product} of $C$ and the group $G$ considered as a permutation group on $G/H$.
For any extension $\Lz$ of $\Kz$ with Galois group $C$, the Galois group of the Galois closure of $\Lz$ over $\Qz$ is a subgroup of this wreath product.
The proposition asserts that the wreath product itself (i.e., the maximal subgroup, which can be viewed as the `generic' case), actually occurs for some $\Lz$.

We give a self-contained proof, but the result also follows from Theorem IV.2.2 of \cite{MM}; or, for $C$ of order $3$ one can use the existence of a generic polynomial for $C$ and apply Proposition 13.8.2 in \cite{FJ}. 
\end{remark}

\begin{proof}[Proof of \ref{prop:wreath_ext}] 
Let $p \neq 2$ be a prime that is totally split in $\Nz$ and denote by $\mfp_1, \dots, \mfp_n$ the primes in $\Kz$ lying above $p$. There exists a Galois extension $\widetilde{\Kz}/\Kz$ with Galois group $C$ in which the prime $\mfp_1$ is inert, while $\mfp_2, \dots, \mfp_n$ are totally split (this follows, e.g., from the Gr\"unwald-Wang theorem). 

Let $X$ be the set of field homomorphisms from $\Kz$ to $\Nz$. Since $G$ acts on $\Nz$, we get an action of $G$ on $X$ by composition. This action is transitive and the 
stabilizer of the inclusion map $\iota\in X$ is $H$, so $X$ is isomorphic to $G/H$ as a $G$-set. For each $\sigma \in X$ we now consider
$\widetilde{\Nz}_{\sigma} := \widetilde{\Kz} \otimes_{\Kz, \sigma} \Nz$, where $\Nz$ is viewed as a $\Kz$-algebra through $\sigma\colon{\Kz} \to \Nz$.
The $C$-action on $\widetilde{\Kz}$ induces a $C$-action on $\widetilde{\Nz}_{\sigma}$ by $\Nz$-algebra automorphisms.
Setting $P_\sigma$ to be the set of primes of $\Nz$ that contain $\sigma(\mfp_1)$, we see that $\widetilde{\Nz}_{\sigma}$ is a
Galois extension of $\Nz$ with Galois group $C$ for which the primes in $P_\sigma$ are inert and all other primes of $\Nz$ over $p$ are totally split.

The $G$-action on the set of primes of $\Nz$ over $p$ is free and transitive, and $P_\iota$ consists of a single $H$-orbit: the primes over $\mfp_1$.
Since $P_{g\sigma}=gP_{\sigma}$ it follows that as $\sigma$ ranges over $X$ the sets $P_\sigma$ form a disjoint family. One deduces that
the fields $\widetilde{\Nz}_{\sigma}$ form a linearly disjoint family of $C$-extensions of $\Nz$ and that the tensor product
$$
\Mz=\bigotimes_{\sigma \in X} \widetilde{\Nz}_{\sigma}
$$
over $\Nz$ of all $\widetilde{\Nz}_{\sigma}$ with $\sigma\in X$ is a field which is Galois over
$\Nz$ with Galois group $C^n=\prod_{\sigma \in X} C$.

For $g\in G$ and $\sigma \in X$ there is a natural field isomorphism $\widetilde{g}_\sigma\colon \widetilde{\Nz}_{\sigma}\to \widetilde{\Nz}_{g\sigma}$ given by $x\otimes y\mapsto x \otimes gy$
that extends the map $\Nz\to \Nz$ given by $y\mapsto gy$.  Combining these maps for all $\sigma \in X$ we obtain an automorphism of the tensor product $\Mz$ that permutes the
factors of the tensor product by the $g$-action on $X$.  Thus, we have extended the $G$-action on $\Nz$ to a $G$-action on $\Mz$.  Since each
$\widetilde{g}_\sigma$ is $C$-equivariant, the subgroup of $\Aut(\Mz)$ generated by $G$ and $C^n$ is the semidirect product $C^n \rtimes G$.
As the cardinality of this group is the field degree of $\Mz$ over $\Qz$ we see that $\Mz$ is a Galois extension of $\Qz$ with Galois group $C^n \rtimes G$, and that
$\Kz$ is the invariant field of $C^n\rtimes H$.
\end{proof}
 
\section{Reconstruction of number fields} \label{section:NF}
Using the previous section we prove a stronger version of Proposition~\ref{PROP} for number fields.

\begin{theorem} \label{thm:Bart}
Let $\Kz$ be a number field and let $k \geq 3$. Then there exists a character $\chi \in \Hab{\Kz}$ of degree $k$ such that every number field $\Lz$ for which there is a character $\chi' \in \Hab{\Lz}$ with $L_{\Lz}(\chi') = L_{\Kz}(\chi)$ is isomorphic to $\Kz$.
\end{theorem}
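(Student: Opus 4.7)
The strategy is to invoke Proposition~\ref{prop:wreath_ext} to construct a character $\chi$ whose induced Galois representation $\rho := \mathrm{Ind}^{G_\Qz}_{G_\Kz}\chi$ (where $G_F$ denotes the absolute Galois group of a number field $F$) is rigid enough that its isomorphism class as a $G_\Qz$-representation recovers $\Kz$ up to isomorphism. One then uses Artin's formalism to translate equality of $L$-series into isomorphism of induced representations.

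Set $n := [\Kz:\Qz]$, fix a Galois closure $\Nz/\Qz$ of $\Kz$ with $G = \Gal(\Nz/\Qz)$ and $H = \Gal(\Nz/\Kz)$, and apply Proposition~\ref{prop:wreath_ext} with $C = \mathbb{Z}/k\mathbb{Z}$ to obtain $\Mz \supset \Nz$ Galois over $\Qz$ with $W := \Gal(\Mz/\Qz) = C^n \rtimes G$, $T := \Gal(\Mz/\Kz) = C^n \rtimes H$, and $C^n = \bigoplus_{x \in X} C_x$ indexed by $X := G/H$. Let $\widetilde{\chi} : T \to \mathbb{C}^*$, $(c,h) \mapsto \exp(2\pi i c_1/k)$, be the projection onto the factor $C_1$ corresponding to the distinguished coset, and let $\chi \in \Hab{\Kz}$ be the order-$k$ character cut out by $\widetilde{\chi}$. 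Then $\rho := \mathrm{Ind}^{G_\Qz}_{G_\Kz}\chi$ factors faithfully through $W$, has dimension $n$, and is irreducible by Clifford's theorem applied to the decomposition $\rho|_{C^n} = \bigoplus_{x \in X}\chi_x$ into the $n$ distinct coordinate projection characters (a single $G$-orbit with stabilizer $T$).

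Now suppose $L_\Lz(\chi') = L_\Kz(\chi)$ for some number field $\Lz$ and character $\chi' \in \Hab{\Lz}$, and set $\rho' := \mathrm{Ind}^{G_\Qz}_{G_\Lz}\chi'$. Artin's formalism gives $L_\Qz(\rho) = L_\Qz(\rho')$, and equality of Artin $L$-functions forces $\rho \cong \rho'$ as $G_\Qz$-representations (via Euler factor comparison together with Chebotarev density). Hence $[\Lz:\Qz] = \dim \rho = n$, and $\ker \rho = G_\Mz$ yields $\Lz \subset \Mz$. Letting $J := \Gal(\Mz/\Lz) \leq W$ of index $n$ and viewing $\chi'$ as a 1-dim character of $J$ with $\mathrm{Ind}^W_J \chi' \cong \rho$, the problem reduces to showing $J$ is $W$-conjugate to $T$: then $\Lz$ and $\Kz$ are fixed fields of conjugate subgroups of $W$, hence $\Qz$-isomorphic.

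For the conjugacy step, irreducibility of $\rho$ and Frobenius reciprocity place $\chi'$ in $\rho|_J$ with multiplicity exactly one. The 1-dim $J$-invariant line on which $J$ acts by $\chi'$ lies in the $\chi'|_{J \cap C^n}$-isotypic $C^n$-subspace $E = \bigoplus_{x \in X_0} \mathbb{C}\, e_x$, where $X_0 := \{x \in X : \chi_x|_{J \cap C^n} = \chi'|_{J \cap C^n}\}$. If $|X_0| = 1$, then $J$ stabilizes the axis $\mathbb{C}\, e_{x_0}$, so $J \subseteq \mathrm{Stab}_W(\mathbb{C}\, e_{x_0}) = C^n \rtimes H_{x_0}$; since this subgroup has the same order as $T$, we obtain $J = C^n \rtimes H_{x_0}$, which is $W$-conjugate to $T$ by transitivity of $G$ on $X$. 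The main obstacle is therefore to prove $|X_0| = 1$, i.e.\ to separate the restrictions $\chi_x|_{J \cap C^n}$ for distinct $x$; the plan is to extract this from the Mackey double coset decomposition $\rho|_J = \bigoplus_{g \in J\backslash W/T} \mathrm{Ind}^J_{J \cap gTg^{-1}}\widetilde\chi^{\,g}$ together with the fact that $\chi'$ appears in exactly one summand with multiplicity one, exploiting that the $\chi_x$ are faithful on $C_x$ and that $k \geq 3$ — the last hypothesis being essential, since for $k = 2$ Gassmann-type counterexamples preclude such rigidity.
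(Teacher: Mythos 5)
Your setup tracks the paper's closely: same appeal to Proposition~\ref{prop:wreath_ext}, same choice of $\chi$ as the first-coordinate projection on $C^n\rtimes H$, same use of Lemma~\ref{easy} to convert $L_{\Lz}(\chi')=L_{\Kz}(\chi)$ into $\mathrm{Ind}^{G_\Qz}_{G_\Kz}\chi\cong\mathrm{Ind}^{G_\Qz}_{G_\Lz}\chi'$. Where you differ is in the target reformulation. The paper works with \emph{monomial structures} and shows that $\rho$ has a unique one; you instead work with the index-$n$ subgroup $J=\Gal(\Mz/\Lz)$ of $W$ and aim to show $J$ is $W$-conjugate to $T$. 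These are equivalent formulations — showing $J$ is conjugate to $T$ is exactly showing the monomial structure $\{gL_J\}_{g\in W/J}$ coincides with the paper's $\cL$ — and everything you write up to that point is correct (irreducibility of $\rho$ via Clifford theory, $\Lz\subset\Mz$, multiplicity one of $\chi'$ in $\rho|_J$ by Frobenius reciprocity, and the observation that $|X_0|=1$ would finish via stabilizer/index counting).

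But the one step that is actually the heart of the theorem — showing $|X_0|=1$, equivalently that the $\chi'$-eigenline is one of the axes $\Cz\,e_x$, equivalently that $C^n\subseteq J$ — is left as a \textquotedblleft plan\textquotedblright\ rather than a proof, and the tools you cite for it don't suffice as stated. Mackey's formula combined with $\langle\rho|_J,\chi'\rangle=1$ tells you only that a single double coset contributes; it gives no control over the size of $J\cap C^n$, and without that the isotypic space $E=\bigoplus_{x\in X_0}\Cz\,e_x$ can a priori have dimension $>1$, in which case the $\chi'$-line need not be an axis and the stabilizer argument collapses. The paper closes this gap with a short but genuinely necessary trace computation: for $c=(\zeta,1,\dots,1)\in C^n$, one has $\mathrm{tr}\,\rho(c)=n-1+\zeta$; on the other hand $c$ permutes any monomial structure $\cM$ and contributes a $k$-th root of unity only on fixed lines, so $\mathrm{tr}\,\rho(c)$ is a sum of $n$ elements of $\mu_k\cup\{0\}$; Lemma~\ref{lem:mu_k_sums} (this is exactly where $k\geq 3$ enters) then forces all $n$ lines to be $c$-fixed with all scalars equal to $1$ except one equal to $\zeta$. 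Running this over the $G$-conjugates of $c$ shows $C^n$ stabilizes every line of $\cM$, hence $C^n\subseteq J$ and $|X_0|=1$. You should replace the final paragraph of your proposal by this trace argument; as written the proof is incomplete at precisely the point where the hypothesis $k\geq 3$ does its work.

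(One minor point: you write $\ker\rho=G_\Mz$; this holds when $\Nz$ is taken to be the Galois closure of $\Kz/\Qz$, so that $G$ acts faithfully on $G/H$. In general one only needs $G_\Mz\subseteq\ker\rho\subseteq G_\Lz$, which already yields $\Lz\subseteq\Mz$, so the logic survives, but the stated equality deserves a word of justification.)
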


We will use the following basic facts about Artin $L$-series of representations of the absolute Galois group
$G_{\Kz}:=\Gal(\widebar \Qz / \Kz)$ for a number field  $\Kz$ within a fixed algebraic closure $\widebar \Qz$ of $\Qz$.

\begin{lemma} \label{easy}\mbox{} 
\begin{enumerate}
\item[\textup{(a)}] For any two representations $\rho$ and $\rho'$ of $G_{\Qz}$, $L_{\Qz} (\rho) = L_{\Qz}(\rho')$ is equivalent to $\rho \cong \rho'$.
\item[\textup{(b)}] For $\chi \in \Hab{\Kz}$, we have $L_{\Kz}(\chi) = L_{\Qz}(\mathrm{Ind}^{G_\Qz}_{G_\Kz} (\chi))$. 
\item[\textup{(c)}] For any two number fields $\Kz$ and $\Lz$ within $\widebar \Qz$ and characters $\chi\in\Hab{\Kz}$ and $\chi'\in \Hab{\Lz}$ with $L_{\Kz} (\chi) = L_{\Lz}(\chi')$ we have an isomorphism of representations of $G_\Qz$
$$\mathrm{Ind}^{G_\Qz}_{G_\Kz} (\chi)\cong \mathrm{Ind}^{G_\Qz}_{G_\Lz} (\chi')$$
and the fixed fields $\Kz_\chi$ of $\chi$ and $\Lz_{\chi'}$ of $\chi'$ have the same normal closure over $\Qz$.
\end{enumerate}
\end{lemma}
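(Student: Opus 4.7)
For part (a), the plan is to invoke the standard Euler product uniqueness to conclude that equality of $L_{\Qz}(\rho)$ and $L_{\Qz}(\rho')$ forces equality of the local $L$-factors at every rational prime $p$. Picking a finite Galois extension $F/\Qz$ through which both $\rho$ and $\rho'$ factor, and restricting attention to the primes $p$ unramified in $F$, the local factor $\det(1-\rho(\mathrm{Frob}_p)\,p^{-s})^{-1}$ determines the characteristic polynomial of $\rho(\mathrm{Frob}_p)$, and in particular $\mathrm{tr}\,\rho(\mathrm{Frob}_p)=\mathrm{tr}\,\rho'(\mathrm{Frob}_p)$ for all such $p$. By Chebotarev density, the Frobenius elements exhaust the conjugacy classes of $\Gal(F/\Qz)$, hence the characters of $\rho$ and $\rho'$ coincide, and so $\rho\cong\rho'$. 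The converse is trivial.

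For part (b), the plan is to appeal to the inductive formalism for Artin $L$-series (see e.g.\ Neukirch, Ch.\ VII): for any intermediate field $\Kz/\Qz$ and any representation $\sigma$ of $G_\Kz$ one has the identity $L_\Qz(\mathrm{Ind}^{G_\Qz}_{G_\Kz}\sigma)=L_\Kz(\sigma)$, because the local factor at a rational prime $p$ of the induced representation decomposes as a product over primes $\mfp\mid p$ of $\Kz$ of the local factors of $\sigma$ (this is a routine calculation with decomposition groups and coset representatives, handling inertia by passing to $V^{I_\mfp}$). Applied with $\sigma=\chi$, this gives (b).

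For part (c), the plan is to combine (a) and (b): from $L_\Kz(\chi)=L_\Lz(\chi')$ and (b) we deduce $L_\Qz(\mathrm{Ind}^{G_\Qz}_{G_\Kz}\chi)=L_\Qz(\mathrm{Ind}^{G_\Qz}_{G_\Lz}\chi')$, and (a) then yields the isomorphism of induced representations. To conclude the normal-closure assertion, I would compute the kernel of an induced character: if $\{g_i\}$ is a set of coset representatives of $G_\Kz$ in $G_\Qz$, then by the explicit matrix description of $\mathrm{Ind}^{G_\Qz}_{G_\Kz}\chi$, an element $g\in G_\Qz$ lies in its kernel iff $g_i^{-1}gg_i\in\ker\chi$ for every $i$, i.e.\ iff $g\in\bigcap_i g_i(\ker\chi)g_i^{-1}$. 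This intersection is the largest normal subgroup of $G_\Qz$ contained in $\ker\chi$, whose fixed field is exactly the normal closure $\widetilde{\Kz_\chi}$ of $\Kz_\chi$ over $\Qz$. The same computation applied to $\chi'$ identifies the kernel with the subgroup cutting out $\widetilde{\Lz_{\chi'}}$, and the isomorphism of representations forces these two kernels to coincide, so $\widetilde{\Kz_\chi}=\widetilde{\Lz_{\chi'}}$.

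The main obstacle is only notational: in (b) one must carefully verify the factorization of the Euler factor for an induced representation at a rational prime $p$, matching decomposition groups in $G_\Qz$ to those in $G_\Kz$ and treating ramified primes via invariants under inertia; everything else reduces to standard character theory and Chebotarev.
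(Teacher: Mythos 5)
Your proposal is correct and follows essentially the same route the paper takes: Chebotarev plus comparison of Euler factors for (a), Artin's inductivity of $L$-series for (b), and for (c) the observation that $\ker\bigl(\mathrm{Ind}^{G_\Qz}_{G_\Kz}\chi\bigr)$ is the normal core of $\ker\chi$ in $G_\Qz$, whose fixed field is the normal closure of $\Kz_\chi$. You merely spell out the computations that the paper leaves to citations.
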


\begin{proof} Fact (a) follows from Chebotarev's density theorem, comparing Euler factors. The basic fact (b) is due to Artin, see \cite{Neu} VII.10.4.(iv). The isomorphism in (c) follows
from (a) and (b). The last statement follows from the fact that the normal closure of $\Kz_\chi$ over $\Qz$ is the fixed field of the kernel of the representation $\mathrm{Ind}^{G_\Qz}_{G_\Kz} (\chi)$. \end{proof}

By a {\em monomial structure} of a representation $\rho$ of a group $G$ we mean a $G$-set ${\cL}$ consisting of $1$-dimensional subspaces of $\rho$ that
is $G$-stable (i.e. $gL\in\cL$ for all $g\in G$ and $L\in\cL$), and such that as a vector space we have $\rho=\bigoplus_{L\in \cL}  L$.
By choosing a single nonzero vector of $L$ for each $L\in\cL$ one obtains a basis of $\rho$ such that for every $g \in G$ the action of $g$ on $\rho$ is given by
a matrix with exactly one non-zero element in each row and in each column.
If $H$ is a subgroup of $G$ and $\chi$ a linear character of $H$, then the induced representation $\rho=\mathrm{Ind}^{{G}}_{{H}}(\chi)$ naturally produces a monomial structure
$\cL$ that is isomorphic to $G/H$ as a $G$-set. 

\begin{proof}[Proof of \ref{thm:Bart}] 
If $L_{\Lz}(\chi') = L_{\Kz}(\chi)$ for two characters $\chi \in \Hab{\Kz}, \chi' \in \Hab{\Lz}$, the lemma implies that $\mathrm{Ind}^{G_\Qz}_{G_\Kz} (\chi)$ has two monomial structures, one arising from $\chi$ and one from $\chi'$. We see that $\Kz$ and $\Lz$ are isomorphic as number fields if and only if these two monomial structures are isomorphic
as $G_\Qz$-sets.  In order to prove \ref{thm:Bart} it therefore suffices to choose $\chi$ in such a way that the representation 
$\mathrm{Ind}^{G_\Qz}_{G_\Kz} (\chi)$ only has a {\em single} monomial structure.

In order to find such a character $\chi$ we apply Proposition~\ref{prop:wreath_ext} where we let $C = \langle \zeta \rangle$ be the subgroup of $\Cz^\times$ generated by $\zeta = \text{exp}(2 \pi i / k)$. We let $n$, $G$, $H$, and $C^n\rtimes G$ be as in the proposition, and get an extension $\Mz$ of $\Kz$ within $\widebar \Qz$ with Galois group $\Gal(\Mz/\Qz)=C^n\rtimes G$.
We order the coordinates of $C^n$ in such a way that the action of $H$ on $C^n$ fixes the first coordinate, so the map
\[
\Gal(\Mz/\Kz)=C^n\rtimes H \to \Cz^\times: \:(a_1, \dots, a_n, h) \mapsto a_1
\]
is a group homomorphism, and extends to a character $\chi \in \Hab{\Kz}$.
The induced representation $\rho=\mathrm{Ind}^{G_\Qz}_{G_\Kz} (\chi)$ factors over $\Gal(\Mz/\Qz)=C^n\rtimes G$ and it comes with a monomial structure $\cL=\{L_1,\ldots,L_n\}$ such that each element $(a_1,\ldots,a_n)\in C^n$ acts on $L_i$ as scalar multiplication by $a_i$.  It follows that
$\cL$ is exactly the set of 1-dimensional $C^n$-submodules of $\rho$, the so-called character eigenspaces for the action of $C^n$ on $\rho$.

To finish the proof we will show that $\cL$ is the unique monomial structure on the representation $\rho$.
Suppose that $\cM$ is another monomial structure on $\rho$.
The trace of the element $c=(\zeta, 1, \ldots,1)\in C^n$ on $\rho$ is equal to $n-1+\zeta$. 
On the other hand, $c$ permutes the elements of $\cM$, so the trace of $c$ on $\rho$ is also equal to the sum of $k$-th roots of unity $\zeta_M \in \mu_k$ where $M$
ranges over those lines $M\in\cM$ with $cM=M$, and $\zeta_M$ is the scalar by which $c$ then acts on $M$.  Since $k\ge 3$ and $\cL$ and $\cM$ have the same number of elements, Lemma~\ref{lem:mu_k_sums} implies that $cM=M$
for all $M\in \cM$. It follows that $c$ acts trivially on the set $\cM$. Since the $G$-conjugates of $c$ generate $C^n$ we deduce that $C^n$ acts trivially on the set $\cM$.
Thus, $\cM$ consists of $1$-dimensional $C^n$-submodules of $\rho$. This implies that $\cM\subset \cL$, so $\cM=\cL$ for cardinality reasons.
\eproof

\begin{remark} Not every representation has a unique monomial structure: consider the isometry group of a square, the dihedral group $D_4$ of order $8$, with its standard 2-dimensional representation.
It has two distinct monomial structures (consisting of the axes and the diagonals) and these are not isomorphic as $D_4$-sets.

By realizing $D_4$ as a Galois group over $\Qz$  this gives rise to quadratic fields $\Kz$ and  and $\Lz$  with quadratic characters $\chi\in\Hab{\Kz}$ and $\chi'\in \Hab{\Lz}$
that satisfy $L_{\Kz} (\chi) = L_{\Lz}(\chi')$ while  $\Kz\not\cong\Lz$. This shows that the method of proof of the theorem fails without
the assumption that $k\geq 3$.

Concretely, $\mathrm{Gal}(\Qz(\sqrt[4]{2}, i)/\Qz) \cong D_4$, and we find  $L_{\Kz}(\chi)=L_{\Lz}(\chi')$ for $\Kz=\Qz(\sqrt{2})$, $\Lz=\Qz(i\sqrt{2})$ and $\chi$ and $\chi'$ uniquely determined by $\Kz_{\chi} = \Qz(\sqrt[4]{2})$ and $\Lz_{\chi'}=\Qz(i \sqrt{2}, (1+i)\sqrt[4]{2})$. 
\end{remark}

\section{Comparison of different methods of proof}
There is an interesting ``incompatibility of proof techniques'' between the case of global function fields and that of number fields. Namely, the approach of the proof of Proposition~\ref{PROP} for function fields does not transfer in an obvious way to number fields. Indeed, for a number field $\Kz$, $\ker(\rec_{\Kz}) = \Kz^* \cdot \overline{\cO_{\Kz,+}^*}$, where $\overline{\cO_{\Kz,+}^*}$ is the closure of the totally positive units of the field in the ideles; this follows from the description of the connected component of the idele class group by Artin, cf.\ Chapter IX in \cite{A-T}. Hence the method of proof of \ref{PROP} transferred literally to number fields yields the weaker conclusion that $$\Psi(\Kz^* \cdot \overline{\cO_{\Kz,+}^*}) = \Lz^* \cdot \overline{\cO_{\Lz,+}^*}.$$ It is unclear to us whether one can deduce that $\Psi(\Kz^*) = \Lz^*$ from the conditions in Theorem \ref{THM}. The issue is similar to the one raised in \cite{H}, 3.3.2. 

On the other hand, it is not possible to copy the proof of Theorem~\ref{thm:Bart} for function fields, as this would force fixing a rational subfield $\mathbb{F}_q(t)$ inside both $\Kz$ and $\Lz$  (that plays the role of $\mathbb{Q}$ in the number field proof), for which there are infinitely many, non-canonical, choices. However, Theorem~\ref{thm:Bart} does hold in the relative setting of separable geometric extensions of a \emph{fixed} rational function field of characteristic not equal to $2$, compare \cite{S}. It is unclear to us whether the analogue of Theorem \ref{thm:Bart} holds for a global function field without fixing a rational subfield. It does seem that $L$-series of global function fields, as polynomials in $q^{-s}$, contain less arithmetical information than their number field cousins (compare \cite{Gross}).

\end{document}